\renewcommand\thefigure{\thesection.\@arabic\c@figure}
\renewcommand\thetable{\thesection.\@arabic\c@table}
\newtheorem{theorem}{Theorem}[section]
\newtheorem{lemma}[theorem]{Lemma}
\newtheorem{proposition}[theorem]{Proposition}
\newtheorem{corollary}[theorem]{Corollary}
\newcommand{\mc}[1]{{\mathcal #1}}
\newcommand{\mf}[1]{{\mathfrak #1}}
\newcommand{\bb}[1]{{\mathbb #1}}
\def\Z{\mathbb Z}
\def\F{\mathcal F}
\begin{document}

\author{Magda Peligrad}

\address{\noindent Department of Mathematical Sciences\\
University of Cincinnati\\
PO Box 210025\\
Cincinnati, OH 45215
\newline
e-mail:  \rm \texttt{peligrm@math.uc.edu} }

\author{Sunder Sethuraman}

\address{\noindent 396 Carver Hall\\
Department of Mathematics\\
Iowa State University\\
Ames, IA \ 50011
\newline
e-mail:  \rm \texttt{sethuram@iastate.edu} }

\title[fBM limits in SSEP]
{On fractional Brownian motion limits in one dimensional
nearest-neighbor symmetric
simple exclusion}

\begin{abstract}
A well-known result with respect to the one dimensional
nearest-neighbor symmetric simple exclusion process is the convergence to
fractional Brownian motion with Hurst parameter $1/4$, in the sense
of finite-dimensional distributions, of the subdiffusively
rescaled current across
the origin, and the subdiffusively rescaled tagged particle position.

The purpose of this note
is to improve this convergence to a functional central limit
theorem, with respect to the uniform topology, and so complete the
solution
to a
conjecture in the literature with respect to simple exclusion processes.
\end{abstract}

\subjclass[2000]{primary 60K35; secondary 82C20}

\keywords{simple exclusion, nearest-neighbor, one dimensional, tagged
  particle, current,
fractional Brownian
  motion, invariance, central limit, subdiffusive}

\thanks{Research supported in part by NSA-H982300510041,
NSF-DMS-0504193, NSA-H982300710016, and a University of Cincinnati Taft grant}

\maketitle

\section{Introduction}

Informally, the one dimensional
nearest-neighbor symmetric simple exclusion process
follows a collection of random walks on
the lattice $\Z$ which move independently except in that
jumps to already occupied sites are suppressed.
More precisely, the exclusion model is a Markov process $\eta_t=
\{\eta_t(x): x\in \Z\}$ evolving on the configuration space $\Sigma = \{0,1\}^\Z$ with generator,
$$(L\phi)(\eta) \ = \ \frac{1}{2}\sum_{x} \big [ \phi(\eta^{x,x+1}) -
 \phi(\eta)\big]$$
where $\eta^{x,x+1}$ is the configuration obtained from $\eta$ by
 exchanging the values at $x$ and $x+1$,
$$\eta^{x,x+1}(z) \ = \ \left\{\begin{array}{rl}
\eta(z) & \ {\rm when \ }z\neq x,x+1\\
\eta(x) & \ {\rm when \ }z=x+1\\
\eta(x+1)&\ {\rm when \ }z=x.\end{array}\right.$$ A more formal
treatment can be found in Liggett \cite{Liggett}. Later, in Section
2, we will also give Harris's description of the model in terms of a
``stirring process.''

As the process is ``mass conservative,'' that is no birth or death,
one expects a family of invariant measures corresponding to particle
density.  In fact, for each $\rho\in [0,1]$, the product over $\Z$
of Bernoulli measures $\nu_\rho$ which independently puts a particle
at locations $x\in \Z$ with probability $\rho$, that is
$\nu_\rho(\eta_x=1)=1-\nu_\rho(\eta_x=0)=\rho$, are invariant (cf.
Liggett \cite{Liggett}).

\medskip
In this note, we concentrate on the integrated flux across the
origin $J(t)$, and the position of a tagged, or distinguished
particle $X(t)$, say the first particle to the left of $1/2$, when
initially the exclusion process starts in an equilibrium $\nu_\rho$
for $0<\rho<1$.  Both objects are interestingly connected, and have
been long well-studied in the literature (see Section 8.4 in Liggett
\cite{Liggett}, Section 6.4 in Spohn \cite{Spohn}, De Masi-Ferrari
\cite{DeMF}).

Perhaps the most intriguing behavior of the current
and tagged particle is their subdiffusive fluctuation behavior,
explained physically in part by the enforced ordering of particles
with no leapfrogging allowed in the dynamics.  It was shown in
Arratia \cite{Arratia}, Rost-Vares \cite{RV} and De Masi-Ferrari
\cite{DeMF} that
\begin{equation}
 \label{subdifflimits}t^{-1/4} J(t) \ \stackrel{d}{\rightarrow} \
N(0,\sigma_J^2) \ \ {\rm and \ \ } t^{-1/4}X(t) \
\stackrel{d}{\rightarrow} \ N(0,\sigma^2_X)\end{equation} where
$\sigma^2_J = \sqrt{2/\pi}(1-\rho)\rho$ and $\sigma^2_X=
\sqrt{2/\pi}(1-\rho)\rho^{-1}$. This is in contrast to the diffusive
behavior in higher dimensions or when the jump probability is longer
range (cf. Chapter 6 in Spohn \cite{Spohn}, Part III in Liggett
\cite{Liggett2}, Sethuraman \cite{seth}).

Given these results, the general belief (see Conjecture 6.5 in Spohn
\cite{Spohn}) is that the process limits with respect to the
rescaled current and tagged particle position converge to respective
fractional Brownian motions with Hurst parameter $1/4$.  In fact,
straightforward modifications of the arguments for
(\ref{subdifflimits}) give convergence in the sense of
finite-dimensional distributions (a case of Theorem 1.2 Landim-Volchan
\cite{LV} gives a
specific statement; see also Jara-Landim \cite{JL}). However, it appears the full functional central
limit theorem conjectured in Spohn \cite{Spohn}, with respect to
exclusion processes, has not been addressed.

The aim of this article is to complete the proof of this
conjecture by supplying path tightness estimates to show, as
$\lambda \uparrow \infty$, that both
\begin{equation}
\label{mainlimits}
\sigma^{-1}_J\lambda^{-1/4}J(\lambda t) \ \Rightarrow \ \bb B_{1/4}(t), {\ \rm and \ \ }
  \sigma^{-1}_X\lambda^{-1/4}X(\lambda
t) \ \Rightarrow \ \bb B_{1/4}(t)
\end{equation} where $\bb B_{1/4}(t)$ is the
standard fractional Brownian motion with parameter $1/4$, and
$\Rightarrow$ denotes weak convergence in $D([0,1])$ endowed with the
  uniform topology.

The plan of the paper is to give some preliminary representations and
estimates in Section \ref{prelim}, and then deduce the limits
(\ref{mainlimits}), through certain maximal inequalities and discrete
time process approximations, in Corollaries \ref{J_fbm} and \ref{tagged_fbm} in
Sections \ref{J_sect} and \ref{tagged_sect} respectively.

Throughout, unless otherwise clear, $P=P_{\nu_\rho}$ and $E=E_{\nu_\rho}$ denote the process
measure and expectation starting under the equilibrium $\nu_\rho$.
Also, as standard, $\lfloor x\rfloor$ denotes the integer part of
$x$.

\section {Representations of current and tagged particle}
\label{prelim}
 In this section, we state a convenient construction of the exclusion
 process through a ``stirring process'' first introduced by Harris \cite{Harris}, and
 discuss some representations of the current and tagged particle position.

 \subsection{ Stirring process}
 \label{stirring_subsection}  The stirring process $\xi_{t}^{i}\in \Z$ for $i\in \Z$ is defined as
 follows.
 At time $t=0$, a particle is put at
 each site and we define $\xi_{0}^{i}=i$ for each $i\in \Z$. To each bond
 $(x,x+1)$ with $x\in \Z$, we associate a Poisson process (clock) with parameter $1/2$.
 When the clock rings at bond $(x,x+1)$, the particles at these sites
 interchange their positions. Then, $\xi_{t}^{i}$ is the position at time $t$ of
 the particle which was at $i$ at time $0$. Given an initial
 configuration
 $\eta$, the simple exclusion process, in terms of
 the stirring process, is
 \begin{equation*}
 \eta _{t}(x)\ =\ 1\big\{x\in \{\xi_{t}^{i}: \eta (i)=1\}\big\},
 \end{equation*}
 that is, in words, $\eta_t(x) =1$ if and only if there is an $i\in \Z$ so that
 $\xi_t^i = x$ and $\eta(i)=1$.
 \subsection{ Current representations}
 \label{current_subsection}  Let $N_+(t), N_-(t)$ be counting
 processes, with infinitessimal rates $(1/2)\eta_s(0)(1-\eta_s(1))$, $(1/2)\eta_s(1)(1-\eta_s(0))$, which count the number of
 particles which cross $0\rightarrow 1$ and $1\rightarrow 0$ up to time
 $t$ respectively.  Then, the current across the bond $(0,1)$ up to
 time $t$ is given by
 $$J(t) \ = \ N_+(t) - N_-(t).$$
 As $N_+(t) - \frac{1}{2}\int_0^t \eta_s(0)(1-\eta_s(1))ds$ and
 $N_-(t) - \frac{1}{2}\int_0^t
 \eta_s(1)(1-\eta_s(0))ds$ are martingales,
 we note the useful decomposition
 $$J(t) \ = \ M(t) + A(t)$$
 where $M(t) = N_+(t) - N_-(t) - \frac{1}{2}\int_0^t
 (\eta_s(0) - \eta_s(1)) ds$ is a martingale, and $A(t) =
 \frac{1}{2}\int_0^t (\eta_s(0)-\eta_s(1))ds$ (cf. (III.2.37) in
 Liggett \cite{Liggett2}).

 In terms of the stirring process, following
 the development in De Masi-Ferrari \cite{DeMF},
 define
 \begin{equation*}
 K^{+}(t)\ =\ \sum_{i\leq 0}1\{\xi_{t}^{i}>0\}; \ \
 K^{-}(t)\ =\ \sum_{i>0}1\{\xi_{t}^{i}\leq 0\}.
 %:=\sum_{i>0}U_{i}
 \end{equation*}
 In words, $K^{+}(t), K^-(t)$ are the number of stirring particles
 starting to the
 left and right of the point $1/2$ and sitting at the right
 and left of $1/2$ at time $t$ respectively.  Denote
$$U_i(t) \ =\ \left\{\begin{array}{rl}
 1\{\xi^i_t>0\} &\ {\rm when} \ i\leq 0\\
1\{\xi^i_t\leq 0\}&\ {\rm when \ } i>0.\end{array}\right.$$

 As in the stirring process all sites are always occupied,
 each crossing of the bond $(0,1)$ in one direction corresponds to a
 simultaneous crossing in the opposite direction.
 Then, $K^{+}(t)- K^-(t)$ is constant in $t$, and since $K^{+}(0)=K^-(0)=0$, $K^{+}(t)=K^-(t):=K(t)$, for all $t\geq 0$. Hence,
 \begin{equation*}
 J(t)=\sum_{i\leq 0}1\{\xi_{t}^{i}>0\}\eta (i)-\sum_{i>0}1\{\xi_{t}^{i}\leq
 0\}\eta (i).
 \end{equation*}

 For $K(t)\geq 1$, let $i_{1}<i_{2}<...<i_{K(t)}\leq 0$ be the random locations for which $\xi
 _{t}^{i_{k}}>0$, and $0<j_{1}<j_{2}<...<j_{K(t)}$ be the random locations for
 which $\xi _{t}^{j_{k}}\leq 0$. Define $B_{k}^{+}=\eta(i_{k})$ and
 $B_{k}^{-}=\eta(j_{k})$ and $A_{k}(t)=B_{k}^{+}-B_{k}^{-}$. Then,
 with the convention that the sum from $1$ to $0$ is equal to $0$, we have the representation
 \begin{equation*}
 J(t)=\sum_{k=1}^{K(t)}A_{k}(t).
 \end{equation*}

We now state some known facts and consequences.

\medskip  (a) Clearly, given $K(t)$, and the
random locations $\{i_{k}\}$ and $\{j_{k}\}$ with $1\leq k\leq K(t)$
the variables $\{A_{k}(t)\}$ are independent, identically
distributed,
 mean $0$, and
 $$P(A_{k}(t) = 1) =
 P(A_k(t) = -1) = \rho(1-\rho), \ \ {\rm and \ \ }
 P(A_k(t) = 0 ) = 1-2\rho(1-\rho).$$

 (b) Also $K(t)$ is a sum of negatively correlated $0,1$ valued random
 variables. Moreover, by Lemma 4.12 Liggett \cite{Liggett},
for all finite $T\subset
 \Z$,
 and all $A\subset \Z$,
\begin{equation}
 P\Big(\bigcap_{i\in T}(\xi _{t}^{i}\in A)\Big)\ \leq \ \prod_{i\in T}P\Big(\xi _{t}^{i}\in A\Big).
 \label{neg}
\end{equation}
 %%  Moreover, a similar argument gives that
 %%  $$
 %%  P_{\nu_\rho}\Big(\bigcap_{i\in T}(\xi_{t_1}^i\in A_1,\ldots,\xi_{t_k}^i\in A_k\Big)\leq
 %%  \prod_{i\in T}P_{\nu_\rho}\Big(\xi_{t_1}^i\in A_1,\ldots,\xi_{t_k}^i\in A_k\Big)$$
 %%  for $t_1<\cdots <t_k$, and $A_1,\ldots,A_k\subset \Z$.

 (c) From basic considerations, $E[K(t)]=E(z(0,t)_{+}),$ where $z(0,t)$ is a symmetric
 random walk on $\Z$ starting at the origin.  Then, $E[K(t)] \leq \sqrt{t}$, and
 \begin{equation*}
 \lim_{t\rightarrow \infty }\frac{E[K(t)]}{\sqrt{t}}\ =\ \frac{1}{\sqrt{2\pi }}.
% \label{mean0}
 \end{equation*}

 (d) We have also some $p$-moment estimates.  Denote $\|V\|_p =
 (E[V^p])^{1/p}$ for simplicity.   \begin{lemma}
 \label{K_p_moment}
 For integers $p\geq 1$, there is a constant $C_0=C_0(p)<\infty$ so
 that for all $t\geq 1$,
 \begin{equation*}
 \|K(t)\|_p\ \leq \ C_0\sqrt{t}.
 \end{equation*}
 \end{lemma}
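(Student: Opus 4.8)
The plan is to control the ordinary $p$-th moment $E[K(t)^p]$ by passing to \emph{factorial} moments, which are precisely the quantities to which the negative dependence bound (\ref{neg}) applies. I would work with the representation $K(t)=K^+(t)=\sum_{i\le 0}U_i(t)$, where $U_i(t)=1\{\xi_t^i>0\}$ is $\{0,1\}$-valued, and abbreviate $p_i=E[U_i(t)]=P(\xi_t^i>0)$, so that $\mu:=\sum_{i\le 0}p_i=E[K(t)]$. The starting point is the elementary identity expressing ordinary powers through falling factorials,
$$ x^p \ = \ \sum_{m=1}^p S(p,m)\, x^{(m)}, \qquad x^{(m)}=x(x-1)\cdots(x-m+1),$$
where the Stirling numbers of the second kind $S(p,m)\ge 0$. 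Applying this with $x=K(t)$ and taking expectations reduces the claim to estimating $E[K(t)^{(m)}]$ for $1\le m\le p$.

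The key step is the bound $E[K(t)^{(m)}]\le \mu^m$. Since each $U_i(t)$ takes values in $\{0,1\}$, the falling factorial counts ordered $m$-tuples of distinct occupied indices,
$$ K(t)^{(m)} \ = \ \sum_{\substack{i_1,\ldots,i_m\le 0\\ \text{distinct}}} U_{i_1}(t)\cdots U_{i_m}(t),$$
so taking expectations produces a sum of joint probabilities $P\big(\bigcap_{k}\{\xi_t^{i_k}>0\}\big)$. Here the negative dependence inequality (\ref{neg}), applied with $A=\{1,2,3,\ldots\}$ and $T=\{i_1,\ldots,i_m\}$, bounds each joint probability by $\prod_k p_{i_k}$; summing over distinct tuples and then enlarging to all ordered tuples gives $E[K(t)^{(m)}]\le \sum_{\text{distinct}}\prod_k p_{i_k}\le \big(\sum_{i\le 0}p_i\big)^m=\mu^m$.

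It then remains to assemble the pieces. Combining the two displays yields $E[K(t)^p]\le \sum_{m=1}^p S(p,m)\,\mu^m$, and since $\mu=E[K(t)]\le\sqrt t$ by fact (c) while $t\ge 1$ forces $\mu^m\le t^{m/2}\le t^{p/2}$, we obtain $E[K(t)^p]\le \mathcal B_p\, t^{p/2}$ with $\mathcal B_p=\sum_{m=1}^p S(p,m)$ the $p$-th Bell number. Taking $p$-th roots gives $\|K(t)\|_p\le \mathcal B_p^{1/p}\sqrt t$, so $C_0=\mathcal B_p^{1/p}$ suffices.

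The main obstacle I anticipate is conceptual rather than computational: the pairwise negative correlation noted in fact (b) controls only the variance and would not by itself reach the higher moments. The argument instead hinges on the full negative dependence (\ref{neg}), valid for every finite index set $T$, together with the observation that it is the factorial moments---not the ordinary moments directly---that decouple into products of marginals under (\ref{neg}). Getting that reduction right, via the Stirling-number identity and the $\{0,1\}$-valued structure of the $U_i(t)$, is the crux; the surviving estimates are routine.
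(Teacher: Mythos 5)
Your proof is correct, and it takes a somewhat different route from the paper's. The paper first uses (\ref{neg}) to dominate $\|K(t)\|_p$ by $\|\sum_{i\le 0}\tilde U_i\|_p$ for \emph{independent} copies $\tilde U_i$ with the same marginals, and then invokes Rosenthal's inequality for sums of independent nonnegative variables to get $\|K(t)\|_p\le C_1\big(E[K(t)]+(E[K(t)])^{1/p}\big)$, finishing with $E[K(t)]\le\sqrt t$. You instead bypass both the comparison-to-independence step and Rosenthal: the Stirling-number identity reduces $E[K(t)^p]$ to factorial moments, the $\{0,1\}$-valued structure turns $K(t)^{(m)}$ into a sum over distinct $m$-tuples of joint probabilities, and (\ref{neg}) bounds each of these by a product of marginals, giving $E[K(t)^{(m)}]\le\mu^m$ directly. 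In effect you have written out, by hand, the special case of Rosenthal for indicator variables fused with the decoupling step; your final bound $\sum_m S(p,m)\mu^m$ is equivalent in strength to the paper's $C(\mu+\mu^{1/p})^p$ (both are of order $\max\{\mu,\mu^p\}$). What your argument buys is self-containedness and an explicit constant ($\mathcal B_p^{1/p}$); what the paper's buys is brevity by citing a standard inequality. One remark: the paper's first step ($\|K(t)\|_p\le\|\sum\tilde U_i\|_p$) is itself usually justified by exactly the expansion you perform, so your write-up also fills in a detail the paper leaves implicit. Your closing observation --- that the pairwise negative correlation of (b) would only control the variance, and that the full finite-intersection inequality (\ref{neg}) is what drives the higher moments --- is accurate and is indeed the crux in both proofs.
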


\begin{proof}
 First, as $K(t) = \sum_{i\leq 0}U_i(t)$, and by (\ref{neg}), we have that
 $$\|K(t)\|_p\ \leq \ \Big\|\sum_{i\leq 0} \tilde{U}_i\Big\|_p$$
 where $\tilde{U}_i\stackrel{d}{=} U_i(t)$ and $\{\tilde{U}_i\}$ are
 $0,1$ valued independent random variables.

 To further estimate, we use Rosenthal inequality (Theorem 1.5.9 in De
 la Pena-Gin\'e \cite{PG}) on independent and nonnegative random variables
 $\{\beta_i\}$:  For $p\geq 1$, there exists a constant $C_1=C_1(p)<\infty$ such that
 $$\Big\|\sum \beta_i\Big\|_p \ \leq \ C_1 \max\Big\{\sum E[\beta_i], \Big(\sum
 E[\beta_i^p]\Big)^{1/p}\Big\}.$$
Now, applying Rosenthal's inequality, for a positive integer
 $p$,  noting that $\tilde{U}_i^p = \tilde{U}_i$, we have,
 \begin{equation*}
\Big \|\sum_{i\leq 0}\tilde{U}_{i}\Big\|_{p}\ \leq \ C_1\Big(\sum_{i\leq 0}E[
 \tilde{U}_{i}]+\Big[\sum_{i\leq
   0}E[\tilde{U}_{i}^{p}]\Big]^{1/p}\Big)\ \leq
 \ C_1\Big(E[K(t)]+(E[K(t)])^{1/p}\Big).
\end{equation*}
The result follows from properties of $E[K(t)]$ listed in part (c), and that $p\geq 1$.
  \end{proof}

 \subsection{ Tagged particle representations}
 \label{tagged_subsection}  Consider a distinguished particle in the
 exclusion system.
 One representation for its displacement $Z(t) = X(t)-X(0)$ is through
 the ``Lagrangian frame,'' $\zeta_t = \theta_{Z(t)}\eta_t$ where
 $\theta_y \eta$ is the shifted configuration $(\theta_y \eta)
 (x) = \eta_{x+y}$.  Then, $\zeta_t$ is a Markov process on $\Sigma' =
 \{\zeta\in \Sigma: \zeta(0)=1\}$ with generator
 \begin{eqnarray*}
 (\mc L\phi)(\zeta) &=&  \frac{1}{2}\sum_{x\neq
   -1,0}\big(\phi(\zeta^{x,x+1})-\phi(\zeta)\big)\\
 &&\ + \ \frac{1}{2}\sum_{i=-1,1}(1-\zeta_i)\big(\phi(\tau_i\zeta)-\phi(\zeta)\big)
 \end{eqnarray*}
 where $\tau_k\zeta$ is the configuration obtained by displacing the
 tagged particle $k$ steps and then shifting the frame,
 $$(\tau_k\zeta)(x) \ =\  \left\{\begin{array}{rl}\zeta(x+k)&\ {\rm
       when}\ x \neq 0, -k\\
 \zeta(0)& \ {\rm when}  \ x=0\\
 \zeta(k)&\ {\rm when \ } x = -k.
 \end{array}\right.
$$
 Define $\mc N_+(t), \mc N_-(t)$ as the counting processes, with
 infinitessimal rates $(1/2)(1-\zeta_s(1))$, $(1/2)(1-\zeta_s(-1))$, which count
 the number of frame shifts of size $1$ and $-1$ respectively up to
 time $t$. Then,
 $$X(t) -X(0)\ = \ \mc N_+(t) - \mc N_-(t).$$
 Similar to the current representation, $\mc N_+(t)-
 \frac{1}{2}\int_0^t (1-\zeta_s(1))ds$ and $\mc N_-(t)
 -\frac{1}{2}\int_0^t (1-\zeta_s(-1))ds$ are martingales, and
 $$X(t) -X(0)\ = \ \mc M(t) + \mc A(t)$$
 where
 $\mc M(t) = \mc N_+(t) - \mc N_-(t) - \frac{1}{2}\int_0^t
 (\zeta_s(-1)-\zeta_s(1))ds$ is a martingale and $\mc A(t) =
 \frac{1}{2}\int_0^t (\zeta_s(-1)-\zeta_s(1))ds$ (cf. Proposition
 III.4.1 in Liggett \cite{Liggett2}).

 On the other hand, with
 respect to the stirring process and a configuration
 $\eta$ drawn from $\nu_\rho$,
 following
 D\"urr-Goldstein-Lebowitz \cite{DGL} and the exposition in De
 Masi-Ferrari \cite{DeMF},
 for $k\geq 1$ let $Y_k(t)$ be the position of the $k$th particle of
 $\eta_t$ to the right of $1/2$; for $k\leq 0$ let
 $Y_k(t)$ be the position of the $(|k|+1)$th particle of $\eta_t$ to the
 left of $1/2$.  Then, at time $t$, the tagged particle, initially the
 $0$th labeled particle, is the $J(t)$th
 particle,
 $$X(t) \ = \ Y_{J(t)}(t),$$ where $J(t)$ was defined in Subsection 2.2.

It will also be useful to note, under the invariant measure
$\nu_\rho$, that $Y_n(t)
\stackrel{d}{=} Y_n(0)$,
$$
Y_n(t) \ =\  \left\{\begin{array}{rl}
Y_1(t) + \sum_{i=1}^{n-1} d_i(t) &\ {\rm for \ } n\geq 2\\
%Y_1(t) & \ {\rm for \ } n=1\\
%Y_0(t) & \ {\rm for \ } n=0\\
Y_0(t) - \sum_{i=n}^{-1} d_i(t) & \ {\rm for \ } n\leq -1\end{array}
\right.$$ where $d_i(t) = Y_{i+1}(t)-Y_{i}$ is the spacing between
the $i$th and $(i+1)$th particles, and also,
$\{d_i(t): i\neq 0\}$, $Y_1(t)$ and $|Y_0(t)|+1$ have independent
Geometric$(\rho)$ distributions.

 %% (d) the following CLT
 %% \begin{equation}
 %% \frac{J(t)}{t^{1/4}}\rightarrow N(0,\sigma_J^{2})\text{ with }
 %% lim_{t\rightarrow \infty }\frac{var(J(t))}{\sqrt{t}}=\sqrt{2/\pi }(1-\rho
 %% )\rho =\sigma ^{2}  \label{CLT}
 %% \end{equation}
 %% can be proved by several methods
 %% (see Arratia 1983, Rost and Vares 1985, De Masi and Ferrari 2001).

 %% e) Moreover, although not stated in these papers,
 %% %By using the fact that the asymptotic behavior of the current can be
 %% %deduced from the hydrodynamic behavior of the symmetric simple exclusion and
 %% %the asymptotics of the density fluctuation field at equilibrium, one can
 %% finite dimensional convergence of $J(nt)/n^{1/4}$
 %% to a Gaussian vector can also be derived (see Landim and Volchan for
 %% the extension of Rost and Vares's method).
 %% For completeness, we give the arguments using Arratia's stirring
 %% process representation in Lemma .

\section{Tightness and fBM limit for the current}
\label{J_sect}
In this section, we prove the following theorem which is the main
vehicle in the article.  At the end of the section, we state as
Corollary \ref{J_fbm} the fractional Brownian motion invariance
principle for the current.

\begin{theorem}
\label{J_tight} Under initial distribution $\nu_\rho$, the
stochastic process $\lambda^{-1/4}J(\lambda t)$ is tight in
$D([0,1])$ endowed with the uniform topology.
\end{theorem}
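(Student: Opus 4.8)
The plan is to prove a moment bound on the increments of the form $E\big[|J(t)-J(s)|^{p}\big]\le C_p\,(t-s)^{p/4}$ for $t-s\ge 1$ and even integers $p>4$, and then to upgrade this, through a maximal inequality and a discrete-time skeleton approximation, to the oscillation control that characterizes tightness in the uniform topology.

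First I would condition on the entire stirring process. Since the initial occupations $\{\eta(i)\}$ are i.i.d.\ Bernoulli$(\rho)$ and independent of the stirring clocks, writing $\phi_i(u)=U_i(u)$ for $i\le 0$ and $\phi_i(u)=-U_i(u)$ for $i>0$, the representation $J(u)=\sum_i\phi_i(u)\eta(i)$ shows that, conditionally on the stirring, $J(t)-J(s)=\sum_i(\eta(i)-\rho)\psi_i$ is a sum of independent, mean-zero, bounded variables with weights $\psi_i=\phi_i(t)-\phi_i(s)\in\{-1,0,1\}$; the $\rho$-part drops out because $\sum_i\phi_i(u)=K^{+}(u)-K^{-}(u)=0$. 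A Rosenthal (or Burkholder--Davis--Gundy) inequality then gives, conditionally, $E\big[|J(t)-J(s)|^{p}\mid\text{stirring}\big]\le C_p\,W^{p/2}$, where $W=\sum_i\psi_i^2=\sum_i 1\{\xi^i_s,\ \xi^i_t\ \text{lie on opposite sides of the bond }(0,1)\}$ counts the stirring particles whose side changed between $s$ and $t$.

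The crux is to identify the law of $W$. The naive bound $W\le 2\big(K(s)+K(t)\big)$ loses the cancellation and yields only a bound in $s$ and $t$ separately, which is useless for tightness; so I would instead use the Markov property of the stirring dynamics. Reindexing particles by the site they occupy at time $s$, and using that the evolution on $[s,t]$ is a fresh stirring of duration $\tau=t-s$ started from the full configuration and independent of $\mc F_s$, one finds $W\stackrel{d}{=}\sum_{j\le 0}1\{\hat\xi^{\,j}_\tau>0\}+\sum_{j>0}1\{\hat\xi^{\,j}_\tau\le 0\}=2K(t-s)$. Lemma \ref{K_p_moment} then gives $E\big[W^{p/2}\big]\le C\,(t-s)^{p/4}$ for $t-s\ge 1$, and hence, after rescaling, $E\big[|\lambda^{-1/4}J(\lambda t)-\lambda^{-1/4}J(\lambda s)|^{p}\big]\le C\,(t-s)^{p/4}$ whenever $\lambda(t-s)\ge 1$.

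Finally, with $p>4$ so that $p/4>1$, I would feed this increment estimate into a M\'oricz-type maximal inequality, whose hypotheses require only a superadditive moment bound on increments rather than independence, to control the maximum of the increments over a grid of mesh $1/\lambda$ and deduce $\lim_{\delta\to0}\limsup_{\lambda}P\big(\sup_{|t-s|\le\delta}|\lambda^{-1/4}J(\lambda t)-\lambda^{-1/4}J(\lambda s)|\ge\varepsilon\big)=0$ along the grid. Since the unscaled current has unit jumps, the rescaled process has jumps of size $\lambda^{-1/4}\to0$, and the within-cell oscillation over intervals of length $<1/\lambda$ is dominated by the number of bond clock rings there; bounding the maximum of these shows the skeleton approximates the full path uniformly. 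Combined with the given finite-dimensional convergence and the vanishing maximal jump, this yields tightness, with a continuous limit, in $D([0,1])$ under the uniform topology. I expect the main obstacle to be the third step, namely extracting genuine $(t-s)$-dependence rather than a bound in $s$ and $t$ separately, which is exactly where the stirring identity $W\stackrel{d}{=}2K(t-s)$ is essential; the small-increment regime $\lambda(t-s)<1$, where Lemma \ref{K_p_moment} degrades, is the secondary point handled by the discrete approximation.
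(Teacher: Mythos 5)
Your proposal is correct and follows the same architecture as the paper's proof: a $p$-th moment bound on increments of order $(t-s)^{p/4}$, obtained by conditioning so that the Bernoulli marks become conditionally independent and applying a Marcinkiewicz/Rosenthal inequality together with Lemma \ref{K_p_moment}; then the M\'oricz--Serfling--Stout maximal inequality (which, as you note, needs only the superadditive moment structure and not independence); and finally a discrete skeleton plus control of the interpolation error. Two sub-steps are executed differently. For the increment bound, the paper conditions only on $K(m)$ and the random locations $\{i_k\},\{j_k\}$ to get $E[J^p(m)]\le C\,E[K^{p/2}(m)]$, and then passes to increments by stationarity, $J(j)-J(i)\stackrel{d}{=}J(j-i)$; your conditioning on the full stirring process and the identity $W\stackrel{d}{=}2K(t-s)$ amounts to reproving that stationarity of increments from the Harris construction --- correct, and it makes explicit why the bound depends on $t-s$ rather than on $s$ and $t$ separately, but it is more work than the paper needs. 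For the interpolation error, the paper uses the decomposition $J=M+A$ with $M$ a martingale, Doob's inequality, and a fourth-moment uniform-integrability argument to show $(\lfloor\lambda\rfloor+1)\,P\big(\sup_{0\le t\le 1}|J(t)|>\eps\lambda^{1/4}/3\big)\to 0$; your domination of the oscillation of $J$ over a unit time interval by the Poisson$(1/2)$ clock at the bond $(0,1)$ is a valid and somewhat more elementary alternative, provided you close it with a tail or moment estimate strong enough to beat the factor $\lambda$ from the union bound (a fifth moment of the Poisson variable already suffices). One small remark: the finite-dimensional convergence you invoke at the end is not needed for tightness itself; it enters only in identifying the limit in Corollary \ref{J_fbm}.
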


\begin{proof}  The proof follows from showing
that the discretized version $\lambda^{-1/4}J(\lfloor\lambda t\rfloor)$ is tight
in $D([0,1])$ in the uniform norm (Proposition \ref{J_disc}), and
then that the difference with the desired process is negligible
(Proposition \ref{J_diff}).
\end{proof}

%% $n\rightarrow \infty$,
%% \begin{equation*}
%% P\Big(\sup_{0\leq t\leq 1}|\frac{J(nt)}{n^{1/4}}-\frac{J([nt])}{n^{1/4}}
%% |>\varepsilon \Big)\ \rightarrow \ 0.
%% \end{equation*}

The first step is to state a useful maximal inequality.
\begin{lemma}
\label{max_ineq}
For integers $m\geq 1$, and even integers $p\geq 2$,
\begin{equation*}
E[J^{p}(m)]\ \leq \ C_2m^{p/4}
\end{equation*}
where $C_2=C_2(p)$ is a constant.  Moreover, for even integers
 $p\geq 6$, there is a constant $C_3=C_3(p)$ such that
\begin{equation*}
E\Big[\max_{1\leq i\leq m}J^{p}(i)\Big]\ \leq \ C_3m^{p/4}.
\end{equation*}
\end{lemma}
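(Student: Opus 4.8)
The first bound reduces, through the stirring representation, to the moment estimate for $K(m)$ already established. I would condition on the $\sigma$-algebra generated by the stirring trajectories, which are independent of the initial occupation variables: this fixes $K(m)$ and the ordered crossing sites $i_1<\cdots<i_{K(m)}\le 0<j_1<\cdots<j_{K(m)}$ while leaving $\eta$ distributed as $\nu_\rho$, so that conditionally $J(m)=\sum_{k=1}^{K(m)}A_k(m)$ is a sum of i.i.d., mean-zero, $\{-1,0,1\}$-valued summands as in (a). For even $p$, expanding the conditional $p$th moment and discarding every term in which some index appears exactly once (these vanish by independence and mean zero), only multi-indices in which each distinct value is repeated at least twice survive; there are $O(K(m)^{p/2})$ of these and each contributes $O(1)$, so $E[J^p(m)\mid \mathrm{stirring}]\le C_p\,K(m)^{p/2}$. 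Taking expectations and applying Lemma \ref{K_p_moment} with the integer exponent $p/2$ gives $E[J^p(m)]\le C_p\,E[K(m)^{p/2}]\le C_2\,m^{p/4}$.

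For the maximal bound I would first upgrade this single-time estimate to a bound on increments. Since the process starts from the invariant measure $\nu_\rho$, the Markov property and time-homogeneity yield stationary increments for the current: for integers $0\le i<j$ one has $J(j)-J(i)\stackrel{d}{=}J(j-i)$, because conditionally on $\mathcal F_i$ the increment is the current generated over a time span $j-i$ from $\eta_i\sim\nu_\rho$. Combined with the first part, this produces the key superadditive increment estimate
\begin{equation*}
E\big[|J(j)-J(i)|^p\big]\ =\ E\big[J^p(j-i)\big]\ \le\ C_2\,(j-i)^{p/4}
\end{equation*}
valid for all integers $0\le i<j\le m$ (and trivially for $i=j$).

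Finally I would feed this into a classical moment maximal inequality for partial sums of M\'oricz type (for instance Theorem 10.2 in Billingsley's \emph{Convergence of Probability Measures}): if $E[|J(j)-J(i)|^p]\le(\sum_{i<l\le j}u_l)^\alpha$ for nonnegative weights $u_l$ and some $\alpha>1$, then $E[\max_{0\le k\le m}|J(k)|^p]\le K_{p,\alpha}(\sum_{l=1}^m u_l)^\alpha$. Choosing $u_l\equiv C_2^{4/p}$ makes $\sum_{i<l\le j}u_l=C_2^{4/p}(j-i)$ and $\alpha=p/4$, so the hypothesis is exactly the increment estimate above; with $i=0$ and $J(0)=0$ the conclusion reads $E[\max_{1\le i\le m}J^p(i)]\le C_3\,m^{p/4}$. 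The requirement $\alpha=p/4>1$ is precisely what forces the threshold $p\ge 6$ (the first even integer beyond $4$).

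The main obstacle is not any single estimate but keeping the exponents sharp: the martingale part $M(t)$ and the additive functional $A(t)$ in the decomposition $J=M+A$ each fluctuate on the diffusive scale $\sqrt t$, so applying Doob's inequality to $M$ directly would only deliver $m^{p/2}$ and destroy the subdiffusive cancellation between $M$ and $A$. The route above avoids this entirely by working with the stirring representation and the stationarity of increments, so that the sharp $m^{p/4}$ scaling of the first display propagates through the maximal inequality unchanged; the only delicate points are the justification of stationary increments and the verification $p/4>1$, which pins down the hypothesis $p\ge 6$.
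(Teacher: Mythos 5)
Your proposal is correct and follows essentially the same route as the paper: conditioning on the stirring data to reduce $J(m)=\sum_{k\le K(m)}A_k(m)$ to a sum of i.i.d.\ bounded mean-zero variables, bounding the conditional $p$th moment by $C\,K(m)^{p/2}$ (the paper cites the Marcinkiewicz inequality where you expand the moment combinatorially, but the content is identical), then invoking Lemma \ref{K_p_moment}, stationarity of increments, and a M\'oricz--Serfling--Stout type maximal inequality with $\alpha=p/4>1$, which is exactly why the paper also restricts to even $p\ge 6$.
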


\begin{proof}
We recall Marcinkiewicz inequality (Lemma 1.4.13 in De la
Pena-Gin\'e \cite{PG}): For $p\geq 1$, there exists a constant
$C_4=C_4(p)$ such that for centered, independent $L^p$ random
variables $\{\beta_i\}$,
$$E\Big|\sum \beta_i\Big|^p \ \leq \ C_4 E\Big[\sum \beta^2_i\Big]^{p/2}.$$

Denote by $\F$ the $\sigma$-algebra generated by $K(t)$, and the
random locations  $\{i_{k}\}$ and $\{j_{k}\}$ with $1\leq k\leq
K(t)$ introduced in Subsection 2.2. Then, by conditioning first on
$\F$, taking into account that $A_k(m)^2\leq 1$ for all $k,m$, and
properties in part (a) Subsection 2.2, we have
\begin{eqnarray*}
E[J^{p}(m)]&=&E\Big[
E\Big[\Big(\sum_{1\leq k\leq K(m)}A_{k}(m)\Big)^p\Big|\F\Big]\Big] \\
&\leq &C_4E\Big[\Big(\sum_{1\leq k\leq K(m)}A_{k}(m)^{2}\Big)^{p/2}\Big]\\
&\leq &C_4E[K^{p/2}(m)].
\end{eqnarray*}
The first statement now follows by Lemma \ref{K_p_moment}.

For the maximal inequality, we note first, from stationarity of
$J(t)$ and the proven first inequality, for all integers $0\leq
i\leq j\leq m$,
\begin{equation}
\label{J_stat} E\big[(J(j)-J(i))^{p}\big] \ = \
E\big[J^p(j-i)\big] \ \leq \ C_2(j-i)^{p/4}.
\end{equation}

We now recall a case of Theorem 3.1 in Moricz-Serfling-Stout \cite{MSS}:  Let
$S_{i,j} = \sum_{k=i}^{j} \beta_k$ where $\{\beta_k\}$ are arbitrary
random variables.  Let also $\mu\geq 1$ and $\alpha>1$.
Suppose
 for some nonnegative numbers $\{u_k\}$,
$E|S_{i,j}|^\mu \leq (\sum_{k=i}^{j} u_k)^\alpha$ for all $1\leq
i\leq j\leq n$.  Then, there is a constant $C_5=C_5(\mu,\alpha)$ such that
\begin{equation}
\label{serfling}
E\big[\max\{|S_{1,1}|,\ldots,|S_{1,n}|\}^\mu\big] \ \leq \
C_5\bigg(\sum_{k=1}^{n} u_k\bigg)^\alpha.\end{equation}

Then, as $p/4>1$, applying (\ref{serfling}) with respect to
(\ref{J_stat}), the second statement follows.
\end{proof}

We now consider the discretized process.
\begin{proposition}
\label{J_disc} Under initial distribution $\nu_\rho$, the stochastic
process $\lambda^{-1/4}J(\lfloor\lambda t\rfloor)$ is tight in $D([0,1])$
endowed with the uniform topology.
\end{proposition}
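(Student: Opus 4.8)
The plan is to verify the stochastic-equicontinuity (modulus-of-continuity) criterion for tightness in the uniform topology, with the maximal inequality of Lemma \ref{max_ineq} supplying the crucial estimate. Write $x_\lambda(t)=\lambda^{-1/4}J(\lfloor\lambda t\rfloor)$ and $w(x_\lambda,\delta)=\sup_{|s-t|\leq\delta}|x_\lambda(s)-x_\lambda(t)|$. Since $x_\lambda(0)=0$, tightness of the initial value is automatic, so it suffices to show that for every $\epsilon>0$,
$$\lim_{\delta\downarrow 0}\ \limsup_{\lambda\to\infty}\ P\big(w(x_\lambda,\delta)>\epsilon\big)\ =\ 0.$$
Because the eventual limit, fractional Brownian motion, has continuous paths — and because the jumps of the step function $x_\lambda$, being $\lambda^{-1/4}$ times integer increments of $J$, are subsumed in $w(x_\lambda,\delta)$ as soon as $\delta\geq 1/\lambda$ — this modulus bound together with the trivial tightness of $x_\lambda(0)=0$ is exactly the standard criterion for tightness in $D([0,1])$ under the uniform topology.

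To estimate the modulus, fix $\delta>0$ and partition $[0,1]$ into $N=\lceil 1/\delta\rceil$ subintervals $[s_{l-1},s_l]$, $s_l=l\delta$. Any $s,t$ with $|s-t|\leq\delta$ fall in at most two adjacent subintervals, so a standard decomposition bounds
$$w(x_\lambda,\delta)\ \leq\ 3\,\max_{1\leq l\leq N}\ \sup_{s_{l-1}\leq t\leq s_l}\big|x_\lambda(t)-x_\lambda(s_{l-1})\big|.$$
By the stationarity of the current recorded in (\ref{J_stat}), each inner supremum equals in law $\lambda^{-1/4}\max_{0\leq k\leq m}|J(k)|$, where the block length $m=\lfloor\lambda s_l\rfloor-\lfloor\lambda s_{l-1}\rfloor\leq\lambda\delta+1$ satisfies $m/\lambda\leq 2\delta$ once $\lambda\geq 1/\delta$.

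Now fix an even integer $p\geq 6$. By Markov's inequality and the maximal bound of Lemma \ref{max_ineq}, each block obeys
$$P\Big(\lambda^{-1/4}\max_{0\leq k\leq m}|J(k)|>\tfrac{\epsilon}{3}\Big)\ \leq\ \frac{3^p}{\epsilon^p\lambda^{p/4}}\,E\Big[\max_{1\leq k\leq m}J^p(k)\Big]\ \leq\ \frac{3^pC_3}{\epsilon^p}\Big(\frac{m}{\lambda}\Big)^{p/4}\ \leq\ \frac{C}{\epsilon^p}\,\delta^{p/4},$$
for $\lambda\geq 1/\delta$, with $C$ absorbing $3^p$, $C_3$ and $2^{p/4}$. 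A union bound over the $N\leq 2/\delta$ blocks then yields
$$P\big(w(x_\lambda,\delta)>\epsilon\big)\ \leq\ N\,\frac{C}{\epsilon^p}\,\delta^{p/4}\ \leq\ \frac{2C}{\epsilon^p}\,\delta^{\,p/4-1}.$$
Since $p\geq 6$ gives $p/4-1\geq 1/2>0$, the right-hand side vanishes as $\delta\downarrow 0$, uniformly in large $\lambda$, which is the desired modulus condition.

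The points requiring care are twofold. First, controlling the full supremum $\sup_{s_{l-1}\leq t\leq s_l}$ within each block — not merely the increment at the grid endpoints — is exactly what the maximal form of Lemma \ref{max_ineq} provides. Second is the exponent bookkeeping: each block contributes a factor $\delta^{p/4}$, while the union bound over $\approx 1/\delta$ blocks costs $\delta^{-1}$, so the argument closes only when $p/4>1$, i.e.\ $p>4$; this is precisely why the maximal bound is invoked at $p\geq 6$, the least even integer in that range. All remaining ingredients — the two-interval chaining decomposition and the reduction of each block to $\max_{0\leq k\leq m}|J(k)|$ via stationarity — are routine.
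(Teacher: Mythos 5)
Your proof is correct and follows essentially the same route as the paper: the modulus-of-continuity criterion, reduction by stationarity of increments to a single block of length $O(\lambda\delta)$, Chebyshev/Markov at an even power $p\geq 6$ combined with the maximal inequality of Lemma \ref{max_ineq}, and the exponent count $\delta^{p/4-1}\to 0$. The only difference is cosmetic: you spell out the blocking and union-bound step that the paper compresses into the phrase ``which reduces, in our situation of stationary increments, to proving $\lim_{\delta}\limsup_{\lambda}\delta^{-1}P(\sup_{s\in[0,\delta]}\lambda^{-1/4}|J(\lfloor\lambda s\rfloor)|\geq\varepsilon)=0$,'' and you keep $p$ general where the paper fixes $p=6$.
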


\begin{proof} According to Billingsley \cite{B},
a well-known tightness condition is to show, for all $\varepsilon >0$, that
\begin{equation*}
\lim_{\delta \rightarrow 0}\limsup_{\lambda\rightarrow \infty
}P\Bigg(\sup_{\stackrel{|s-t|<\delta}{s,t\in [0,1]}
}\lambda^{-1/4}|J(\lfloor\lambda t\rfloor)-J(\lfloor\lambda s\rfloor)|\geq \varepsilon \Bigg
)\ =\ 0
\end{equation*}
which reduces, in our situation of stationary increments, to proving
\begin{equation*}
\lim_{\delta \rightarrow 0}\limsup_{\lambda \rightarrow \infty
}\delta^{-1}P\Bigg(\sup_{s\in
[0,\delta]}\lambda^{-1/4}|J(\lfloor\lambda s\rfloor)|\geq \varepsilon \Bigg )\
=\ 0.
\end{equation*}

By Chebychev's inequality,
\begin{equation*}
P\Bigg(\sup_{s\in [0,\delta] }\lambda^{-1/4}|J(\lfloor\lambda
s\rfloor)|\geq \varepsilon \Bigg)\ \leq \ {\varepsilon
}^{-6}E\Bigg(\sup_{s\in [0,\delta]
}\lambda^{-6/4}|J^6(\lfloor\lambda s\rfloor)|\Bigg).
\end{equation*}
Also, by the maximal inequality in Lemma \ref{max_ineq},
\begin{equation*}
E\Bigg(\sup_{s\in [0,\delta]}\lambda^{-6/4}|J^6(\lfloor\lambda
s\rfloor)|\Bigg)\ = \ E\Bigg(\max_{1\leq i\leq \lfloor\lambda
\delta\rfloor}\lambda^{-6/4}|J^6(i)|\Bigg) \ \leq \ C_3\delta^{3/2}
\end{equation*}
which is enough to conclude the proof. \end{proof}

The difference between the discretized and desired process is handled
as follows.
\begin{proposition}
\label{J_diff}
\begin{equation*}
\lim_{\lambda\rightarrow \infty}P\Big(\sup_{0\leq t\leq
1}\lambda^{-1/4}|J(\lambda t)-J(\lfloor\lambda t\rfloor)|>\varepsilon \Big)\ = \
0.
\end{equation*}
\end{proposition}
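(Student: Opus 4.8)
The plan is to control the fluctuation of the continuous-time current over each unit sub-interval by the number of times the Poisson clock at the bond $(0,1)$ rings, and then to show that the maximum of these clock counts, over the roughly $\lambda$ relevant intervals, is negligible compared to $\lambda^{1/4}$.

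First I would reduce the supremum over $t\in[0,1]$ to a maximum over unit intervals. For $t$ with $\lfloor\lambda t\rfloor=n$ one has $\lambda t\in[n,n+1)$ and $J(\lambda t)-J(\lfloor\lambda t\rfloor)=J(\lambda t)-J(n)$, so that
\begin{equation*}
\sup_{0\le t\le 1}|J(\lambda t)-J(\lfloor\lambda t\rfloor)| \ \le \ \max_{0\le n\le\lfloor\lambda\rfloor}\ \sup_{n\le s\le n+1}|J(s)-J(n)|.
\end{equation*}
Next, recalling $J=N_+-N_-$ with $N_\pm$ counting the crossings $0\to1$ and $1\to0$, I would note that $|J(s)-J(n)|\le[N_+(s)+N_-(s)]-[N_+(n)+N_-(n)]$, the total number of crossings of the bond $(0,1)$ during $[n,s]$. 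In Harris's construction each such crossing is produced by a ring of the rate-$1/2$ Poisson clock attached to the bond $(0,1)$, so this total is dominated by $R_n$, the number of rings of that clock in $[n,n+1]$. The variables $R_0,R_1,\ldots$ are independent and identically Poisson$(1/2)$ distributed, being increments of a single Poisson process over disjoint unit intervals, and in particular possess moments of all orders. Combining the two displays,
\begin{equation*}
\sup_{0\le t\le 1}|J(\lambda t)-J(\lfloor\lambda t\rfloor)| \ \le \ \max_{0\le n\le\lfloor\lambda\rfloor} R_n.
\end{equation*}

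Finally I would estimate the tail. Fixing an integer $p>4$, say $p=5$, a union bound together with Markov's inequality gives
\begin{equation*}
P\Big(\max_{0\le n\le\lfloor\lambda\rfloor} R_n > \varepsilon\lambda^{1/4}\Big) \ \le \ (\lfloor\lambda\rfloor+1)\,\varepsilon^{-p}\lambda^{-p/4}\,E[R_0^p] \ \le \ C\lambda^{1-p/4},
\end{equation*}
which tends to $0$ as $\lambda\to\infty$ since $p/4>1$; this yields the claim.

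The only delicate step is the domination in the middle paragraph: the net current increment over a short time interval must be bounded by the purely geometric bond-clock count, independent of the intricate exclusion dynamics. This is where the argument really lives, and it is immediate from the graphical representation, since a current change at the bond $(0,1)$ can occur only at a ring of the associated clock. The scaling then closes comfortably, because the maximum of order $\lambda$ light-tailed (Poisson) variables grows only logarithmically in $\lambda$, far below the $\lambda^{1/4}$ threshold, so that any choice $p>4$ suffices.
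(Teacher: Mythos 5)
Your proposal is correct, but it takes a genuinely different route from the paper. The paper also reduces to a maximum over the $\lfloor\lambda\rfloor+1$ unit intervals (via stationarity of increments of $J$), but then controls $P(\sup_{0\leq t\leq 1}|J(t)|>\varepsilon\lambda^{1/4}/3)$ by Markov's inequality with the \emph{fourth} moment of $\sup_{0\leq t\leq 1}|J(t)|$, which it bounds through the martingale decomposition $J(t)=M(t)+A(t)$, Doob's $L^4$ inequality, and a Poisson domination of the counting processes. Since the fourth moment only exactly cancels the factor $\lambda$ from the union bound, the paper must keep the indicator $I(\sup|J|>\varepsilon\lambda^{1/4}/3)$ inside the expectation and invoke dominated convergence to conclude. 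You instead dominate the oscillation of $J$ over each unit interval directly by $R_n$, the number of rings of the rate-$1/2$ Poisson clock at the bond $(0,1)$ in Harris's graphical construction; this is a valid pathwise bound, since $|J(s)-J(n)|\leq [N_+(s)+N_-(s)]-[N_+(n)+N_-(n)]\leq R_n$ and every crossing of the bond requires a ring of that clock. Because the $R_n$ are i.i.d.\ Poisson$(1/2)$ with all moments finite, you can take any $p>4$ and the union bound closes with a strict power $\lambda^{1-p/4}\to 0$, with no need for the truncation/uniform-integrability step. Your argument is more elementary (no martingales, no Doob) and gives a quantitative rate; its only cost is reliance on the graphical construction, which the paper has already set up in Section~\ref{stirring_subsection}, so this is a perfectly acceptable and arguably cleaner alternative.
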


\begin{proof}
By stationary increments, noting $(nt)-\lfloor nt\rfloor\leq 1$,
\begin{eqnarray*}
&&P\Big(\sup_{0\leq t\leq 1}\lambda^{-1/4}
|J(\lambda t)-J(\lfloor\lambda t\rfloor)|>\varepsilon \Big)\\
&&\ \ \ \ \ \ \ \ \ \ \ \ \ \ \ \ \leq \
P\Bigg(\sup_{\stackrel{0\leq s\leq t\leq \lambda}{t-s\leq
  1}}\lambda^{-1/4}
|J(t)-J(s)|>\varepsilon \Bigg) \\
&&\ \ \ \  \ \ \ \ \ \ \ \ \ \ \ \ \leq \
3\sum_{i=0}^{\lfloor\lambda\rfloor}P\Big(\sup_{i\leq t\leq
  i+1}\lambda^{-1/4}
|J(t)-J(i)|>\varepsilon/3 \Big)\\
&&\ \ \ \  \ \ \ \ \ \ \ \ \ \ \ \ =\
3(\lfloor\lambda\rfloor+1)P\Big(\sup_{0\leq t\leq
1}|J(t)|>\varepsilon \lambda^{1/4}/3\Big)
\\
&&\ \ \ \  \ \ \ \ \ \ \ \ \ \ \ \ \leq \
6E\Bigg[\sup_{0\leq t\leq 1}|J(t)|^{4}I\Big(\sup_{0\leq
t\leq 1}|J(t)|>\varepsilon \lambda^{1/4}/3\Big)\Bigg].
\end{eqnarray*}

We now show that  $E[\sup_{0\leq t\leq
1}|J(t)|^{4}]<\infty$ to deduce that the last quantity vanishes as
$\lambda\uparrow \infty$.
 Indeed, from the decomposition $J(t) = M(t) + A(t)$ where $M(t)$ is a
 martingale and $A(t) = (1/2)\int_0^t (\eta_0(s) - \eta_1(s)) ds$,
as the integrand $|\eta_0-\eta_1|$ is bounded by $1$, we need only
bound
$E[\sup_{0\leq t\leq 1}M^4(t)]$.  By using
Doob's inequality and simple computations,
 $$E\Big[\sup_{0\leq t\leq 1} M^4(t)\Big]
\ \leq \ (4/3)^4E[M^4(1)] \ \leq \
 4(4/3)^4\Big(E[J^4(1)] + 1\Big ).$$
 But, $J(t)= N_+(t)-N_-(t)$ is the difference of two counting
 processes each
with rates bounded by $1/2$, and so by coupling with respect to dominating Poisson rate$(1/2)$ processes,
we have $E[J^4(1)]<\infty$; this can
 also be seen from computing directly with $J(1) =
 \sum_{k=1}^{K(1)}A_k(1)$.
\end{proof}

\begin{corollary}
\label{J_fbm} Under initial distribution $\nu_\rho$, with respect to
the uniform topology on $D([0,1])$, we have as $\lambda\uparrow
\infty$,
\begin{equation*}
\lambda^{-1/4}J(\lambda t)\ \Rightarrow \ \sigma_J \bb B_{1/4}(t)
\end{equation*}
where $\bb B_{1/4}(t)$ is the fractional Brownian motion process
with index $1/4$.
\end{corollary}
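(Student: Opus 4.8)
The plan is to deduce the invariance principle by combining the path tightness just established in Theorem \ref{J_tight} with the already-known convergence of finite-dimensional distributions, and then to identify the limit and dispose of the topological subtleties arising from the non-separable uniform metric on $D([0,1])$. In other words, the genuinely new analytic input (tightness) is in hand, so the corollary should reduce to an assembly of that estimate with the finite-dimensional convergence and a standard relative-compactness argument.

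First I would record the finite-dimensional convergence: for any $0\leq t_1<\cdots<t_k\leq 1$ the vector $\lambda^{-1/4}(J(\lambda t_1),\ldots,J(\lambda t_k))$ converges in law to a centered Gaussian vector, by straightforward modification of the arguments underlying (\ref{subdifflimits}) (cf. Landim-Volchan \cite{LV} and Jara-Landim \cite{JL}). To identify the limit as $\sigma_J\bb B_{1/4}$ I would check the covariance. Using the stationary increments of $J$, one has $\var\big(J(\lambda t)-J(\lambda s)\big)=\var J(\lambda(t-s))$, and hence the identity $2\,\mathrm{Cov}\big(J(\lambda s),J(\lambda t)\big)=\var J(\lambda s)+\var J(\lambda t)-\var J(\lambda(t-s))$. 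The one-dimensional limit in (\ref{subdifflimits}), together with the moment bound $E[J^p(m)]\leq C_2 m^{p/4}$ of Lemma \ref{max_ineq} (which gives uniform integrability of $(\lambda^{-1/4}J(\lambda))^2$ and so convergence of second moments $\lambda^{-1/2}\var J(\lambda u)\to\sigma_J^2\, u^{1/2}$), then yields $\lambda^{-1/2}\,\mathrm{Cov}\big(J(\lambda s),J(\lambda t)\big)\to \sigma_J^2\cdot\tfrac12\big(s^{1/2}+t^{1/2}-|t-s|^{1/2}\big)$, which is exactly $\sigma_J^2$ times the covariance of $\bb B_{1/4}$. Joint Gaussianity of the limit follows by the Cram\'er-Wold device combined with the conditional central limit theorem applied to $J(t)=\sum_{1\leq k\leq K(t)}A_k(t)$ given the $\sigma$-algebra $\F$, since conditionally the $A_k(t)$ are i.i.d., bounded, mean zero, and $K(t)$ grows like $\sqrt{t}$.

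Finally I would assemble the two ingredients. By Theorem \ref{J_tight} the family $\{\lambda^{-1/4}J(\lambda\,\cdot)\}$ is tight, hence relatively compact, in the uniform topology on $D([0,1])$. The key point is that the oscillation bound underlying Proposition \ref{J_disc}, namely $\lim_{\delta\to 0}\limsup_{\lambda}P\big(\sup_{|s-t|<\delta}\lambda^{-1/4}|J(\lambda t)-J(\lambda s)|\geq\varepsilon\big)=0$, is a $C$-tightness statement, so every subsequential weak limit is concentrated on $C([0,1])$, where the uniform and Skorokhod topologies coincide and the space is separable. On this subspace the finite-dimensional distributions determine the law uniquely, and by the previous paragraph that law is the one of $\sigma_J\bb B_{1/4}$; thus all subsequential limits agree and the full convergence $\lambda^{-1/4}J(\lambda t)\Rightarrow\sigma_J\bb B_{1/4}(t)$ holds in the uniform topology. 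The main obstacle here is precisely this topological bookkeeping: because the uniform metric renders $D([0,1])$ non-separable, one cannot invoke the routine ``tightness $+$ fdd'' theorem for the Skorokhod topology directly, and it is exactly the $C$-tightness form of Theorem \ref{J_tight} (rather than a weaker Skorokhod tightness) that lets one locate every limit point in $C([0,1])$ and conclude.
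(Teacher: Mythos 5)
Your proposal is correct and follows essentially the same route as the paper: tightness from Theorem \ref{J_tight}, Gaussianity of finite-dimensional limits from the literature, uniform integrability from the moment bounds of Lemma \ref{max_ineq}, and identification of the covariance via stationary increments and the one-dimensional limit (\ref{subdifflimits}), yielding $\tfrac{\sigma_J^2}{2}(\sqrt{t}+\sqrt{s}-\sqrt{t-s})$. Your added remarks on $C$-tightness and the non-separability of the uniform topology, and the Cram\'er--Wold sketch of joint Gaussianity, are just more explicit versions of steps the paper handles by citation.
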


\begin{proof}  From Theorem \ref{J_tight}, we know that
$\lambda^{-1/4}J(\lambda t)$ is tight.  Also, from the literature
(cf. Landim-Volchan \cite{LV}) the finite-dimensional distributions
of any limit are Gaussian. Hence, if $W(t)$ is a limit along a
subsequence, this limit is a continuous Gaussian process. By Lemma
\ref{max_ineq}, the sixth moment of $\lambda^{-1/4}J(\lambda t)$ is
uniformly bounded. So, we have uniform integrability for first and
second powers of the process (see also De Masi-Ferrari \cite{DeMF}),
and therefore convergence of these moments.

To finish, we
just have to compute the limit of covariances
\begin{eqnarray*}
&&4{\rm cov}\Big(\lambda^{-1/4}J(\lambda t),\lambda^{-1/4}J(\lambda s)\Big)\\
&&\ \ \ = \ E\Big[\lambda^{-1/4}J(\lambda t)+
\lambda^{-1/4}J(\lambda
s)\Big]^{2}-E\Big[\lambda^{-1/4}J(\lambda
t)-\lambda^{-1/4}J(\lambda s)\Big]^{2}
\\
&&\ \ \ = \ E\Big[\lambda^{-1/2}J^{2}(\lambda t)\Big]
+E\Big[\lambda^{-1/2}J^{2}(\lambda t)\Big]\\
&&\ \ \ \ \ \ \ \ \ \ \  +\   2{\rm cov}\Big(\lambda
^{-1/4}J(\lambda t),\lambda^{-1/4}J(\lambda s)\big)
-E\Big[\lambda^{-1/2}J^{2}(\lambda(t-s))\Big].
\end{eqnarray*}
Then, for $t>s$, recalling (\ref{subdifflimits}),
\begin{equation*}
\lim_{\lambda\rightarrow \infty }{\rm
cov}\Big(\lambda^{-1/4}J(\lambda t),\lambda^{-1/4}J(\lambda s)\Big)\
=\ \frac{\sigma_J^{2}}{2}\Big(\sqrt{t}+\sqrt{s}-\sqrt{t-s}\Big).
\end{equation*}
\end{proof}
%% SOME\ DISCUSSION. The process $\frac{J(nt)}{n^{1/4}}$ has asymptotic
%% negatively associated increments.

\section{Approximation and fBM limit for the tagged particle}
\label{tagged_sect}
In this section, we approximate $\lambda^{-1/4}X(\lambda t)$ by
$\lambda^{-1/4}\rho^{-1}J(\lambda t)$ (Proposition
\ref{tagged_approx}) by adapting part of the proof of Proposition 2.8 in
D\"urr-Goldstein-Lebowitz \cite{DGL}.  Hence, as a process,
$\lambda^{-1/4}X(\lambda t)$ will converge to the same fractional Brownian motion
limit as $\lambda^{-1/4}\rho^{-1}J(\lambda t)$ (Corollary \ref{tagged_fbm}).
\medskip

The first step is to approximate on the integers.
For $0<\epsilon<1$, $k\geq 1$ and $t\geq 0$, define
$$J_{\epsilon, k}(t) \ = \ \left\{\begin{array}{rl} \epsilon
|J(t)| & \ {\rm for \ } |J(t)| \geq t^{1/8} +k\\
3\rho^{-1}(t^{1/8} +k) & \ {\rm for \ } |J(t)|< t^{1/8}
+k.\end{array}\right.$$
Recall from Subsection \ref{tagged_subsection}, the tagged particle representation $X(t) = Y_{J(t)}$.
\begin{proposition}
\label{tagged_disc} For $0< \epsilon<1$,
% there is a
%$k_0=k_0(\epsilon,\varepsilon)$ such that for $k\geq k_0$,
we have
$$\lim_{k\rightarrow \infty} P\Big ( \sup_{t\in \bb Z_+} \frac{|Y_{J(t)}(t) -
\rho^{-1}J(t)|}{J_{\epsilon,k}(t)} >1\Big) \ = \ 0.$$
\end{proposition}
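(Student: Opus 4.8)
The plan is to bound the probability by a union bound over the countable index set $t\in\bb Z_+$, and for each $t$ to split according to the two branches defining $J_{\epsilon,k}$. Writing $n=J(t)$, the quantity $Y_{J(t)}(t)-\rho^{-1}J(t)$ is exactly the centered fluctuation $Y_n(t)-\rho^{-1}n$ of the $n$th particle position about its mean location. From the representations in Subsection \ref{tagged_subsection}, for $n\geq 1$ this fluctuation equals $(Y_1(t)-\rho^{-1})+\sum_{i=1}^{n-1}(d_i(t)-\rho^{-1})$, a sum of independent, centered Geometric$(\rho)$ summands, and symmetrically for $n\leq -1$ using the $Y_0(t)$ representation. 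The key estimate I would establish, for even $p\geq 2$, is the moment bound $E|Y_n(t)-\rho^{-1}n|^p\leq C|n|^{p/2}$, obtained from the Marcinkiewicz inequality already recalled in Lemma \ref{max_ineq} together with the power-mean inequality and the finiteness of all Geometric moments; upgrading via Doob's $L^p$ inequality for the martingale of partial sums then gives $E[\max_{|n|\leq N}|Y_n(t)-\rho^{-1}n|^p]\leq CN^{p/2}$. I emphasize that, because I estimate $P(J(t)=n,\ \cdots)\leq P(\cdots)$, no independence between the label $J(t)$ and the configuration $\{Y_n(t)\}$ is required; only the marginal law of the time-$t$ configuration, which equals that at time $0$, enters.

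In the branch $|J(t)|\geq t^{1/8}+k$, the event to control is that $|Y_n(t)-\rho^{-1}n|>\epsilon|n|$ for the random $n=J(t)$ with $|n|\geq t^{1/8}+k$. Bounding this by a union over $|n|\geq t^{1/8}+k$ and applying Markov's inequality with the moment bound gives, for fixed $t$ and $p>2$,
$$\sum_{|n|\geq t^{1/8}+k}\frac{C|n|^{p/2}}{(\epsilon|n|)^p}\ \leq\ C\epsilon^{-p}\sum_{|n|\geq t^{1/8}+k}|n|^{-p/2}\ \leq\ C'\epsilon^{-p}(t^{1/8}+k)^{1-p/2}.$$
In the branch $|J(t)|<t^{1/8}+k$, the deviation $|Y_{J(t)}(t)-\rho^{-1}J(t)|$ is at most $\max_{|n|\leq t^{1/8}+k}|Y_n(t)-\rho^{-1}n|$, so by the maximal moment bound and Markov's inequality the probability it exceeds $3\rho^{-1}(t^{1/8}+k)$ is at most $C''(t^{1/8}+k)^{-p/2}$. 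Here the centering subtraction and the factor $3\rho^{-1}$ merely provide the room making the fluctuation, of order $(t^{1/8}+k)^{1/2}$, negligible against the threshold, of order $t^{1/8}+k$.

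It then remains to sum the two per-$t$ bounds over $t\in\bb Z_+$ and to show the total vanishes as $k\to\infty$. Splitting the sum at $t=k^8$, so that $t^{1/8}\leq k$ below and $t^{1/8}>k$ above, a direct computation yields $\sum_t(t^{1/8}+k)^{1-p/2}=O(k^{9-p/2})$ and $\sum_t(t^{1/8}+k)^{-p/2}=O(k^{8-p/2})$, both of which tend to $0$ as $k\to\infty$ once $p$ is taken large enough; any even $p>18$ works. For fixed $\epsilon$ the prefactor $C'\epsilon^{-p}$ is harmless, so each bound goes to $0$, proving the claim. The step I expect to be the main obstacle is precisely this summability bookkeeping: the growing threshold $t^{1/8}$ is what renders the sum over the infinitely many times $t$ convergent, while a single fixed cutoff would not suffice, and this forces the moment order $p$ to be chosen sufficiently large, a choice legitimized by the finiteness of all Geometric moments.
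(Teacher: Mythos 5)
Your proof is correct, and its skeleton is the one the paper uses: a union bound over $t\in\bb Z_+$, large-moment Markov bounds on the centered sums of independent Geometric$(\rho)$ spacings, and the growing cutoff $t^{1/8}+k$ supplying summability in $t$ (the paper likewise needs $q>8$ there; your $p>18$ is the same count after translating $q=p/2-1$). The one place you genuinely diverge is the branch $|J(t)|<t^{1/8}+k$. The paper does not make a separate probabilistic estimate for it: it observes that on the good event $G_{\epsilon,m}=\{|Y_n(t)-\rho^{-1}n|\le\epsilon|n| \text{ for all } |n|\ge m\}$ with $m=t^{1/8}+k$, the ordering of particles forces $|Y_n(t)|\le\max\{|Y_{-m}(t)|,|Y_m(t)|\}\le m(\rho^{-1}+\epsilon)$ for $|n|\le m$, hence $|Y_n(t)-\rho^{-1}n|\le 3\rho^{-1}m$ deterministically; thus a single tail estimate $P(G_{\epsilon,m}^c)\le C m^{-q}$ covers both branches of $J_{\epsilon,k}$. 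You instead control the small-index branch with a Doob maximal inequality for the partial-sum martingale, giving $P(\max_{|n|\le m}|Y_n(t)-\rho^{-1}n|>3\rho^{-1}m)\le Cm^{-p/2}$, which is also perfectly valid and self-contained, at the cost of one extra (routine) maximal-moment estimate; the paper's monotonicity trick buys a slightly leaner argument and explains why the constant $3\rho^{-1}$ appears in the definition of $J_{\epsilon,k}$. Your remark that no independence between $J(t)$ and the configuration is needed, since everything is bounded by a union over the deterministic index $n$, is exactly the right point and is implicitly how the paper's estimate works as well.
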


\begin{proof} First, we note, as in D\"urr-Goldstein-Lebowitz
\cite{DGL}, on the event
$$G_{\epsilon,m} \ =\  \Big\{ |Y_n(t) - \rho^{-1}n| \leq \epsilon
|n| \ {\rm for \ all \ }|n|\geq m\Big\}$$ that
$|Y_n(t) -\rho^{-1}n| \leq   3\rho^{-1}m$
when $|n|\leq m$ because particles cannot cross and so
\begin{equation}
\label{rmk_G}|Y_n(t)| \ \leq\
\max\{|Y_{-m}(t)|,|Y_m(t)|\} \ \leq \ m(\rho^{-1} +\epsilon) \leq
2\rho^{-1}m.
\end{equation}

By the representation from Subsection 2.3, and that $\{d_i(t): i\neq
0\}$, $Y_1(t)$ and $|Y_0(t)|+1$ are independent Geometric$(\rho)$
random variables, we deduce
\begin{eqnarray*}
&&P\Big( |Y_n(t) - \rho^{-1}n|> |n|\epsilon \ {\rm for \ some \
}|n|\geq m\Big)\\
 &&\ \ \ \ \leq \
2\sum_{l\geq m} P\Big( \Big|(Y_1(t)-\rho^{-1})+\sum_{i=1}^{l-1} (d_i(t)
-\rho^{-1})\Big| > l\epsilon\Big)\\
&&\ \ \ \ \leq \  C_6 m^{-q}
\end{eqnarray*}
where $C_6=C_6(\epsilon, \rho, q)$ for any power of $q>0$.

Hence, noting the remark made near (\ref{rmk_G}),
\begin{eqnarray*}
&&P\Big( {\rm for \ some \ }t\in \bb Z_+, \ |Y_{J(t)}(t) - \rho^{-1}J(t)|> |J_{\epsilon,k}(t)|\Big)\\
&&\ \ \leq 2\ \sum_{t\in \bb Z_+} P\Big( |Y_n(t) -
\rho^{-1}n|>\epsilon |n| \ {\rm for \
some \ }|n|\geq t^{1/8}+k\Big)\\
&&\ \ \leq \ C_6 \sum_{t\in \bb Z_+}(t^{1/8}+k)^{-q}.
\end{eqnarray*}
With $q>8$,
%and $k$ large enough,
the last expression vanishes as $k\uparrow \infty$.
%is less than $\varepsilon$.
\end{proof}

\begin{proposition}
\label{tagged_approx}
For $\delta>0$, we have
$$\lim_{\lambda\rightarrow \infty} P\Big (
\sup_{t\in [0,1]} \lambda^{-1/4}|X(\lambda t) - \rho^{-1}J(\lambda
t)|>\delta\Big) \ =\ 0.$$
\end{proposition}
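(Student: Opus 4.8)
The plan is to pass from the integer-time approximation of Proposition \ref{tagged_disc} to the continuous-time uniform statement by decomposing over unit intervals and separately controlling the within-interval fluctuations of $X$ and $J$. Writing $s=\lambda t$, the object to control is $\lambda^{-1/4}\sup_{s\in[0,\lambda]}|X(s)-\rho^{-1}J(s)|$, and for $s\in[i,i+1)$ with $i=\lfloor s\rfloor$ I would use
$$|X(s)-\rho^{-1}J(s)|\ \le\ |X(s)-X(i)|+\rho^{-1}|J(s)-J(i)|+|X(i)-\rho^{-1}J(i)|.$$
Taking the supremum, this is bounded by $\max_{0\le i\le\lfloor\lambda\rfloor}(P_i+\rho^{-1}Q_i)+\max_{0\le i\le\lfloor\lambda\rfloor}|X(i)-\rho^{-1}J(i)|$, where $P_i$ and $Q_i$ count the tagged-particle jumps and current crossings in $[i,i+1]$. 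Since both $X$ and $J$ move by $\pm1$ jumps driven by counting processes of rate at most $1/2$, each of $P_i,Q_i$ is stochastically dominated by a Poisson$(1)$ variable, exactly as in the coupling used in the proof of Proposition \ref{J_diff}.

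For the fluctuation term, a union bound gives $P(\max_{i\le\lfloor\lambda\rfloor}(P_i+\rho^{-1}Q_i)>\lambda^{1/8})\le(\lambda+1)P(P_0+\rho^{-1}Q_0>\lambda^{1/8})$, and because the Poisson tail decays faster than any power of $\lambda^{1/8}$, this vanishes; hence $\lambda^{-1/4}\max_{i\le\lfloor\lambda\rfloor}(P_i+\rho^{-1}Q_i)\to 0$ in probability. For the integer-time term I would invoke Proposition \ref{tagged_disc}: off an event whose probability tends to $0$ as $k\to\infty$, one has $|X(i)-\rho^{-1}J(i)|=|Y_{J(i)}(i)-\rho^{-1}J(i)|\le J_{\epsilon,k}(i)$ for every $i\in\bb Z_+$. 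From the definition of $J_{\epsilon,k}$, for $i\le\lambda$ one has $J_{\epsilon,k}(i)\le\epsilon|J(i)|+3\rho^{-1}(\lambda^{1/8}+k)$, so
$$\lambda^{-1/4}\max_{i\le\lfloor\lambda\rfloor}J_{\epsilon,k}(i)\ \le\ \epsilon\,\lambda^{-1/4}\max_{i\le\lfloor\lambda\rfloor}|J(i)|+3\rho^{-1}\big(\lambda^{-1/8}+k\lambda^{-1/4}\big).$$
The additive term vanishes as $\lambda\to\infty$ for fixed $k$, while by the maximal inequality of Lemma \ref{max_ineq} with $p=6$ together with Markov's inequality, $P(\lambda^{-1/4}\max_{i\le\lfloor\lambda\rfloor}|J(i)|>a)\le C_3 a^{-6}$, so the first term is $O_P(\epsilon)$ uniformly in $\lambda$.

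To conclude, given $\delta,\eta>0$, I would first fix $\epsilon$ small enough that the Markov estimate $C_3(4\epsilon/\delta)^6<\eta/2$ holds, then (for that $\epsilon$) choose $k$ large enough that the exceptional event of Proposition \ref{tagged_disc} has probability below $\eta/2$; finally letting $\lambda\to\infty$ renders the fluctuation and additive contributions negligible, yielding $\limsup_{\lambda}P(\sup_{t\in[0,1]}\lambda^{-1/4}|X(\lambda t)-\rho^{-1}J(\lambda t)|>\delta)<\eta$, and since $\eta$ is arbitrary the probability tends to $0$. I expect the main obstacle to be precisely the passage from integer to continuous time: the union bound of Proposition \ref{tagged_disc} is unavailable over the uncountable index set, and the key point making the argument work is that the within-interval jump counts $P_i,Q_i$ are Poisson-tailed, so their maximum over the $O(\lambda)$ intervals grows only poly-logarithmically and is therefore negligible after division by $\lambda^{1/4}$.
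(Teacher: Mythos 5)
Your proof is correct, and it follows the same skeleton as the paper's: the triangle-inequality split into two unit-interval discretization errors plus an integer-time term, and then the treatment of the integer-time term via $X(l)=Y_{J(l)}(l)$, Proposition \ref{tagged_disc}, and the bound $J_{\eps,k}(l)\le\eps|J(l)|+3\rho^{-1}(\lambda^{1/8}+k)$. You diverge in two sub-steps. For the within-interval fluctuations the paper reuses the argument of Proposition \ref{J_diff}: the martingale decompositions of $J$ and $X$, Doob's inequality, and a finite fourth moment of $\sup_{0\le t\le1}|J(t)|$ (resp.\ of $\sup_{0\le t\le 1}|X(t)|$), so that the union-bound factor of order $\lambda$ is absorbed by $(\eps\lambda^{1/4})^{-4}$ via uniform integrability; you instead dominate the per-interval jump counts by Poisson variables and use their super-polynomial tails, which is more elementary, avoids the martingale decomposition entirely, and gives a much stronger (near-logarithmic) bound on the maximum --- the stationarity of increments needed for your union bound holds since the process starts in equilibrium, so this is sound. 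For the term $\eps\lambda^{-1/4}\max_{l}|J(l)|$ the paper invokes the already-established functional CLT for the current (Corollary \ref{J_fbm}) and sends $\eps\downarrow0$ only at the very end, whereas you apply the discrete maximal inequality of Lemma \ref{max_ineq} with $p=6$ together with Markov's inequality to get the bound $C_3(4\eps/\delta)^6$ uniformly in $\lambda$, which lets you fix $\eps$ in advance. Your route has the minor structural advantage that the proposition then depends only on Lemma \ref{max_ineq} and Proposition \ref{tagged_disc} rather than on Corollary \ref{J_fbm}, though this buys nothing for the end result since Corollary \ref{tagged_fbm} requires Corollary \ref{J_fbm} in any case.
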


\begin{proof}
First, we write
\begin{eqnarray*}
|X(\lambda t) - \rho^{-1}J(\lambda t)| &\leq& |X(\lambda t)-X(\lfloor\lambda t\rfloor)|
+|X(\lfloor\lambda t\rfloor) - \rho^{-1}J(\lfloor\lambda t\rfloor)|\\
&&\ \ \ \ \ \ \ + \ \rho^{-1}|J(\lfloor\lambda
t\rfloor)-J(\lambda t)|\end{eqnarray*}
 and note both
$$\lim_{\lambda \rightarrow\infty}P\Big( \sup_{t\in [0,1]}
\lambda^{-1/4}|X(\lambda t)-X(\lfloor\lambda
t\rfloor)|>{\delta}/{3}\Big)
\ =\ 0$$
and
$$\lim_{\lambda \rightarrow\infty}P\Big( \sup_{t\in [0,1]}
\lambda^{-1/4}|J(\lambda t)-J(\lfloor\lambda t\rfloor)|>{\delta}/{3}\Big) \ =\
0.$$
Indeed, the second limit is estimated in the proof of
Proposition \ref{J_diff}, and the first limit is similarly argued:
Write $X(t)-X(0) = \mc M(t) + \mc A(t)$ where $\mc M(t)$ is a
martingale and $\mc A(t)$ is an additive functional with integrand
bounded by $1/2$; note $|X(0)|+1$ is a Geometric$(\rho)$ random
variable, and so $E[X^4(0)] \leq C_7$; then,
\begin{eqnarray*}
E\big[\sup_{0\leq t\leq 1}X^4(t)\big] & \leq &
4E\big[\sup_{0\leq t\leq 1} (X(t)-X(0))^4\big]
+ 4 E[X^4(0)]\\
&\leq& 16(4/3)^4\big(E\big[M^4(1)\big] + 1\Big) +4C_7 \\
& \leq & C_8 \Big( E\big[X^4(1)\big] + 1\Big);\end{eqnarray*} and,
as $X(t)-X(0) = \mc N_+(t) - \mc N_-(t)$ is the difference of two
counting processes whose infinitessimal rates are bounded by $1/2$, by
coupling with respect to dominating Poisson rate$(1/2)$ processes,
$E[X^4(1)]< 4E[(X(1)-X(0))^4] + 4C_7<\infty$.

Hence, we need only show
$$\lim_{\lambda \rightarrow\infty}P\Big( \sup_{t\in [0,1]}
\lambda^{-1/4}|X(\lfloor\lambda t\rfloor)-\rho^{-1} J(\lfloor\lambda
t\rfloor)|>{\delta}/3\Big) \ = \ 0.$$ This limit is the same as
$$ \mf{L}\ :=\ \lim_{\lambda\rightarrow
\infty} P\Big (\max_{0\leq l\leq \lfloor\lambda\rfloor}
\lambda^{-1/4}|X(l) - \rho^{-1}J(l)|>\delta/3\Big).$$
 As $X(l)=Y_{J(l)}(l)$, and
$$\lambda^{-1/4}|Y_{J(l)}(l)-\rho^{-1}J(l)|\  =\  \frac{|Y_{J(l)}(l) -
\rho^{-1}J(l)|}{|J_{\epsilon,k}(l)|}\frac{|J_{\epsilon,k}(l)|}{\lambda^{1/4}}$$
for $0<\epsilon<1$ and $k\geq 1$,
%using Proposition \ref{tagged_disc}, and
noting for large $\lambda$
that $3\rho^{-1}(\lambda^{1/8}+k)\lambda^{-1/4}<\delta/6$, we have
\begin{eqnarray*}\mf{L}&\leq&\lim_{\lambda\rightarrow \infty}
P\Bigg(\max_{0\leq l\leq \lfloor\lambda\rfloor}
\frac{|J_{\epsilon,k}(l)|}{\lambda^{1/4}}>\frac{\delta}{3}\Bigg) +
P\Bigg ( \sup_{t\in \bb Z_+} \frac{|Y_{J(t)}(t) -
\rho^{-1}J(t)|}{J_{\epsilon,k}(t)} >1\Bigg).
\end{eqnarray*}
With $\epsilon$ fixed for the moment, $\mf{L}$ is further bounded by
$$\lim_{\lambda\rightarrow\infty} P\Bigg( \sup_{t\in
[0,1]} \frac{|J(\lambda t)|}{\lambda^{1/4}}\geq
\frac{\delta}{3\epsilon}\Bigg) + \lim_{k\rightarrow \infty}
P\Bigg ( \sup_{t\in \bb Z_+} \frac{|Y_{J(t)}(t) -
\rho^{-1}J(t)|}{J_{\epsilon,k}(t)} >1\Bigg).
$$
The
second limit vanishes by Proposition \ref{tagged_disc}.  Also,
the first limit, by the
invariance principle already proved for $\lambda^{-1/4}J(\lambda t)$
with respect to continuous fractional Brownian motion
(Corollary \ref{J_fbm}), vanishes by later taking
$\epsilon\downarrow 0$. \end{proof}

As mentioned in the beginning of the section, from Proposition
\ref{tagged_approx} and Corollary \ref{J_fbm}, we obtain the
fractional Brownian motion limit for the rescaled tagged motion.
\begin{corollary}
\label{tagged_fbm}
Under initial distribution $\nu_\rho$,
\begin{equation*}
\lambda^{-1/4}X(\lambda t) \ \Rightarrow \ \sigma_X \bb B_{1/4}(t)
\end{equation*}
in $D([0,1])$ endowed with the uniform topology, where $\bb
B_{1/4}(t)$ is the fractional Brownian motion process with parameter
$1/4$.
\end{corollary}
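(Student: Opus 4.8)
The plan is to deduce the limit directly from the two main ingredients already established: the invariance principle for the current in Corollary \ref{J_fbm} and the uniform approximation of the tagged particle by the current in Proposition \ref{tagged_approx}. The mechanism tying them together is a ``converging together'' argument in the metric space $D([0,1])$ equipped with the uniform distance. First, I would note that Corollary \ref{J_fbm} gives $\lambda^{-1/4}J(\lambda t)\Rightarrow \sigma_J \bb B_{1/4}(t)$ in the uniform topology, and since scaling by the fixed constant $\rho^{-1}$ is a continuous map, the mapping theorem yields $\lambda^{-1/4}\rho^{-1}J(\lambda t)\Rightarrow \rho^{-1}\sigma_J\bb B_{1/4}(t)$.

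Next I would invoke the converging together theorem (Theorem 3.1 in Billingsley \cite{B}): if $V_\lambda \Rightarrow V$ in a metric space with metric $d$, and $d(W_\lambda,V_\lambda)\to 0$ in probability, then $W_\lambda \Rightarrow V$. Here I take $V_\lambda = \lambda^{-1/4}\rho^{-1}J(\lambda t)$, $W_\lambda = \lambda^{-1/4}X(\lambda t)$, and $d$ the uniform distance on $D([0,1])$. The quantity $d(W_\lambda,V_\lambda)$ is precisely $\sup_{t\in[0,1]}\lambda^{-1/4}|X(\lambda t)-\rho^{-1}J(\lambda t)|$, which tends to $0$ in probability by Proposition \ref{tagged_approx}. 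Hence $\lambda^{-1/4}X(\lambda t)\Rightarrow \rho^{-1}\sigma_J\bb B_{1/4}(t)$.

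To finish, I would identify the limiting scale. Using $\sigma_J^2=\sqrt{2/\pi}(1-\rho)\rho$ and $\sigma_X^2=\sqrt{2/\pi}(1-\rho)\rho^{-1}$, a one-line check gives $\rho^{-2}\sigma_J^2=\sqrt{2/\pi}(1-\rho)\rho^{-1}=\sigma_X^2$, so $\rho^{-1}\sigma_J=\sigma_X$ and the limit is exactly $\sigma_X\bb B_{1/4}(t)$, as claimed.

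Given that both inputs are in hand, there is no genuine obstacle; the only delicate point is measure-theoretic. The space $D([0,1])$ under the uniform topology is nonseparable, so one must be mildly careful before applying the standard weak-convergence machinery. This is harmless here because the limiting process $\bb B_{1/4}$ is concentrated on the separable subspace $C([0,1])$ of continuous paths, on which the uniform and Skorokhod topologies agree and the converging together statement applies without reservation; this is the same framework already used implicitly in Corollary \ref{J_fbm}.
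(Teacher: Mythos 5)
Your proposal is correct and follows exactly the route the paper takes: the paper's "proof" is the one-line observation that Corollary \ref{J_fbm} and Proposition \ref{tagged_approx} combine via a converging-together argument, and you have simply spelled out that argument (continuous mapping for the factor $\rho^{-1}$, the uniform-distance approximation, and the check that $\rho^{-1}\sigma_J=\sigma_X$). The added remark about separability and concentration of the limit on $C([0,1])$ is a reasonable precaution but does not change the substance.
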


%I. Pinelis Optimim bounds on moments of sums of independent random vectors
%Siberian advances in marthematics 1995 v 5 n3 141-150

%Thomas M. Liggett, Negative correlations and particle systems. Markov Proc.
%Rel. Fields 8 (2002), 547-564.

\end{document}